\newtheorem{theorem}{Theorem}
\newtheorem{conj}{Conjecture}
\title{On L(2,1)-labelings of oriented graphs}
\author{
  Lucas Colucci$^{1,2}$\\
  \texttt{lucascolucci@renyi.hu}
  \and
  Ervin Gy\H ori$^{1,2}$\\
  \texttt{gyori.ervin@renyi.mta.hu}
}
\date{  $^1$Alfréd Rényi Institute of Mathematics, Hungarian Academy of Sciences, Re\'altanoda~u.~13--15, 1053 Budapest, Hungary\\%
        $^2$Central European University, Department of Mathematics and its Applications, N\'ador~u.~9, 1051 Budapest, Hungary\\[2ex]%
        \today}
\begin{document}
\maketitle

\begin{abstract}
   We extend a result of Griggs and Yeh about the maximum possible value of the $L(2,1)$-labeling number of a graph in terms of its maximum degree to oriented graphs. We consider the problem both in the usual definition of the oriented $L(2,1)$-labeling number and in some variants we introduce.
\end{abstract}

\section{Introduction}

A \emph{$L(2,1)$-labeling}, or \emph{$L(2,1)$-coloring}, of a graph $G$ is a function $f:V(G)\rightarrow\{0,\dots,k\}$ such that $|f(u)-f(v)|\geq 2$, if $uv \in E(G)$; and $|f(u)-f(v)|\geq 1$, if there is a path of length two joining $u$ and $v$. The minimum value of $k$ among the $L(2,1)$-labelings of $G$ is denoted by $\lambda_{2,1}(G)$, and it is called the \emph{$L(2,1)$-labeling number} of $G$. This notion was introduced by Yeh \cite{yeh1990labeling}, and it traces back to the frequency assignment problem of wireless networks introduced by Hale \cite{hale1980frequency}. 

\medskip

The definitions above can be extended to oriented graphs (a directed graph whose underlying graph is simple), namely: if $G$ is an oriented graph, a $L(2,1)$-labeling of $G$ is a function $f:V(G)\rightarrow\{0,\dots,k\}$ such that $|f(u)-f(v)|\geq 2$, if $uv \in E(G)$; and $|f(u)-f(v)|\geq 1$, if there is a \emph{directed }path of length two joining $u$ and $v$. The corresponding $L(2,1)$-labeling number is usually denoted by $\overrightarrow{\lambda}_{2,1}(G)$. These labelings were first considered by Chang and Liaw \cite{chang20032}, and the $L(2,1)$-labeling problem has been extensively studied since then in both undirected and directed versions. We refer the interested reader to the excellent surveys of Calamoneri \cite{calamoneri2011h} and Yeh \cite{yeh2006survey}.

\medskip

One of the most basic results about $L(2,1)$-labelings, which appeared in the seminal paper of Griggs and Yeh \cite{griggs1992labelling}, is an asymptotically sharp upper bound on $\lambda_{2,1}(G)$ as a function of $\Delta$, the maximum degree of the graph. On the one hand, they proved that there is a greedy $L(2,1)$-labeling of $G$ with $k \leq \Delta^2+2\Delta$; on the other hand, every $L(2,1)$-labeling of the incidence graph of a projective plane requires $k \geq \Delta^2-\Delta$. They conjectured that the stronger bound $\lambda_{2,1}(G) \leq \Delta^2$ holds for every $G$, which was proved by Havet et al. \cite{havet20082} for sufficiently large values of $\Delta$.

\medskip

In this note, we will address the problem of bounding the $L(2,1)$-labeling number asymptotically in directed graphs. Our results are divided into two sections: in Section \ref{classic}, we will consider the asymptotic value of the $L(2,1)$-labeling number of oriented graphs as it is defined above. In Section \ref{new}, we introduce alternative definitions of this number and deal with the corresponding problems in these new settings.

\section{Classical directed graph version}\label{classic}

\medskip

Even though there is a bound on $\overrightarrow{\lambda}_{2,1}(G)$ in terms of $\lambda_{2,1}(H)$, where $G$ is an oriented graph and $H$ is its underlying graph, namely, $\overrightarrow{\lambda}_{2,1}(G)\leq\lambda_{2,1}(H)$, it is usually far from sharp. Indeed, these two quantities behave quite differently: while it is easy to see that $\Delta(H)+1\leq \lambda_{2,1}(H)$ (as every vertex in a neighbor of a vertex in $H$ must be labeled with a different number), there is no such phenomenon in the oriented case, in which the neighborhood of any vertex can be locally colored with two colors, one for the in-neighborhood and other for the out-neighborhood. In fact, there is no lower bound on $\overrightarrow{\lambda}_{2,1}(G)$ in terms of its maximum degree: for instance, every directed tree $T$ satisfies $\overrightarrow{\lambda}_{2,1}(T)\leq 4$ (\cite{chang20032}). On the other hand, for an undirected tree $T$, $\Delta(T)+1\leq \lambda_{2,1}(T)\leq \Delta(T)+2$ (\cite{griggs1992labelling}). Similar contrasting results hold for broader classes of oriented planar graphs (see, e.g., \cite{calamoneri20132}).

\medskip

Motivated by these differences, we show in the following theorem that, for oriented graphs, we can give a sharper bound on $\overrightarrow{\lambda}_{2,1}(G)$ as a function of the maximum degree inside a block (i.e., a maximal biconnected subgraph) of the underlying graph of $G$ (in contrast to its global maximum degree). We also show a construction that yields a lower bound asymptotically equal to half of the upper bound.

\medskip

\begin{theorem}
Let $G$ be an oriented graph with the following property: for every block $B$ of its underlying graph, all the in- and outdegrees of the vertices of $G[B]$ are bounded by $k$. Then $\overrightarrow{\lambda}_{2,1}(G)\leq 2k^2+6k$.
\end{theorem}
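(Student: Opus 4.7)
The plan is to label $G$ greedily in an order induced by the block-cut tree of the underlying graph. Root this tree at some block and traverse it in DFS order; upon reaching a block $B$ its ``parent'' cut vertex $v_B$ is the unique already-labeled vertex of $B$, and we label $V(B)\setminus\{v_B\}$ one vertex at a time, each time picking the smallest label in $\{0,1,\ldots,2k^2+6k\}$ not forbidden by previously-labeled vertices.

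The core estimate is done inside a single block. When labeling a vertex $u$, each already-labeled neighbor forbids $3$ labels (from $|f(u)-f(w)|\geq 2$) and each already-labeled vertex joined to $u$ by a directed path of length two forbids $1$ more. Inside $B$ the hypothesis gives $u$ at most $k$ in-neighbors and $k$ out-neighbors, hence at most $2k$ neighbors and at most $k\cdot k+k\cdot k=2k^2$ directed-path-length-two vertices (from out-out and in-in paths through $B$). The within-block forbidden count is therefore at most $3\cdot 2k+2k^2=2k^2+6k$, leaving one of the $2k^2+6k+1$ labels available. This already handles single-block graphs and, in general, handles any vertex $u\in B$ not adjacent to $v_B$, for which no constraint spills across blocks.

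The main obstacle is the case where $u$ is adjacent to $v_B$: a directed path $u\to v_B\to w$ or $w\to v_B\to u$ with $w$ in a previously-labeled block sharing $v_B$ contributes the extra forbidden color $f(w)$, which can a priori push the total beyond $2k^2+6k$. To absorb these cross-block constraints into the single-block estimate I would exploit the structural fact that the $L(2,1)$-condition, applied through any cut vertex $v$, already forces $v$'s in-neighbor and out-neighbor labels (across all blocks containing $v$) to form disjoint subsets of the palette. By coordinating the labelings of the blocks meeting at each cut vertex so that they all reuse a common pre-allocated set of in- (respectively out-) labels for $v$'s neighbors, every cross-block forbidden color will already be counted among the within-block ones, preserving the $2k^2+6k$ bound globally.
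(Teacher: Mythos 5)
Your block-by-block greedy set-up and the within-block count of $3\cdot 2k+2k^2=2k^2+6k$ forbidden labels are correct, and you have correctly isolated the one place where the argument can break: a vertex $u$ adjacent to the parent cut vertex $v_B$ sees, through the directed two-paths $u\to v_B\to w$ and $w\to v_B\to u$, potentially unboundedly many already-labeled vertices $w$, since $v_B$ may lie in arbitrarily many previously processed blocks. The difficulty is that your proposed repair --- ``coordinating'' the blocks at each cut vertex so that they ``reuse a common pre-allocated set'' of in-labels and out-labels --- is exactly the crux of the theorem, and it is asserted rather than carried out. Three things are left unresolved. First, the order inside a block matters: if a neighbor $a$ of $v_B$ is labeled after its other neighbors in $B$, up to $2k^2+6k$ labels may already be forbidden for it, so nothing guarantees that a label from a small pre-allocated set survives; you must label all of $N(v_B)\cap B$ before the rest of $B$ and redo the count for those vertices (note they constrain one another by adjacency, so they need labels pairwise at least $2$ apart, not merely distinct). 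Second, the existence and size of the pre-allocated sets inside the palette $\{0,\dots,2k^2+6k\}$ is never verified. Third, consistency across the block tree: a block generally contains a child cut vertex $w$ in addition to $v_B$, and a vertex adjacent to both forces the pre-allocation at $w$ (which all of $w$'s descendant blocks must respect) to depend on the pre-allocation at $v_B$; your scheme does not show these requirements can be met simultaneously. Finally, the claim that ``every cross-block forbidden color will already be counted among the within-block ones'' holds only if $N^+(v_B)\cap B$ exhausts the pre-allocated out-set, which fails whenever that set is larger than $N^+(v_B)\cap B$, as it must be to leave room; the count for neighbors of $v_B$ has to be redone from scratch.

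The paper supplies precisely the missing mechanism by organizing the recursion differently: it chooses a cut vertex $v$ such that at most one block containing $v$ has another cut vertex, deletes the interiors of all the other blocks $B_1,\dots,B_t$ at $v$ (each of which then has $v$ as its \emph{only} cut vertex), colors the remainder by induction, and then colors all uncolored in- and out-neighbors of $v$ in one step, using $2k$ still-free colors $c_1<\dots<c_{2k}$ with the in-neighbors drawn from $\{c_1,c_3,\dots,c_{2k-1}\}$ and the out-neighbors from $\{c_2,c_4,\dots,c_{2k}\}$; only afterwards is the rest of each $B_i$ finished greedily. You would need to make your coordination step explicit at this level of detail for the proof to go through.
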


\begin{proof}

We proceed by induction on the number of blocks of $H$, the underlying graph of $G$. If $H$ has only one block (that is, it is 2-connected), it is clear that we can color $G$ greedily using at most $2k^2+6k+1$ colors, since the first (resp. second) directed neighborhood of any vertex $v$ in $G$ contains at most $2k$ (resp. $2k^2$) vertices, and each of those vertices forbids at most three (resp. one) colors for $v$.

\medskip

On the other hand, if $H$ contains at least two blocks, let $v$ be a cut vertex with the property that at most one of the blocks containing $v$ contains a cut vertex distinct from $v$. It is clear that such a vertex exists from the tree structure of the blocks of $H$. Let $B_1,\dots,B_t$ be the blocks containing $v$ such that $v$ is the only cut vertex of $B_i$.

\medskip

We apply induction on the graph $G'=G-\bigcup_{i=1}^t(V(B_i)\backslash\{v\})$ to get a coloring of it using at most $2k^2+6k+1$ colors. We are left with the vertices of the blocks $B_i$ (except $v$) to color.

\medskip

Let $A$ and $B$ be, respectively, the set of uncolored vertices that point to and from $v$ in $G$. It is clear that the size of any connected component in $A$ and $B$ is at most $k$ and that the only paths joining these components pass through $v$. In this way, as $v$ has at most $2k$ colored neighbors in $G$ at this point, we have at least $2k^2+6k+1-2k-3 \geq 2k$ distinct free colors for the vertices in $A$ and $B$. Let some of the free colors be $c_1<c_2<\dots<c_{2k}$. We use colors $c_1,c_3,\dots,c_{2k-1}$ for $A$ and $c_2,c_4,\dots,c_{2k}$ for $B$, coloring each vertex in a connected component with a distinct color.

\medskip

Now that $A\cup B$ is colored, we have to color the vertices of $\bigcup_{i=1}^tB_i$ at distance at least two from $v$. We can color these vertices greedily as before, since its neighbors and second neighbors lie inside a block of $H$, in which the maximum degree is $k$.
\end{proof}

The construction, as we show in the next theorem, is more sophisticated than the corresponding one for the undirected case:

\begin{theorem}\label{const}
There is an oriented graph $G$ such that its underlying graph is 2-connected, every indegree and outdegree in $G$ is bounded by $k+O(1)$ and $\overrightarrow{\lambda}_{2,1}(G)\geq k^2 + O(k)$.
\end{theorem}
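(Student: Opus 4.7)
The plan is to take $G$ to be a directed Cayley graph on $\mathbb{Z}_n$ for $n = k^2 + O(k)$, with a carefully chosen skew connection set $S \subset \mathbb{Z}_n$ of size $k$, arranged so that every two vertices of $G$ are either adjacent or joined by a directed path of length~$2$. Once this is arranged, every $L(2,1)$-labeling of $G$ must use $n$ distinct values, yielding $\overrightarrow{\lambda}_{2,1}(G) \geq n - 1 = k^2 + O(k)$.

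Concretely, I would fix a prime power $q$ and set $k = q + 1$, $n = q^2 + q + 1$. Let $D \subset \mathbb{Z}_n$ be a Singer perfect difference set, so $|D| = k$ and every nonzero element of $\mathbb{Z}_n$ has a unique representation as $d - d'$ with $d, d' \in D$. In particular $D$ generates $\mathbb{Z}_n$, and the same holds for any translate $S = D + t$. Choose $t$ so that $S \cap (-S) = \emptyset$: the bad translates are those with $2t \in -(D + D)$, of which there are at most $|D + D| = k(k+1)/2 < n$ (using that $n$ is odd). Define $G$ by $V(G) = \mathbb{Z}_n$ and $i \to j$ iff $j - i \in S$. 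Then $G$ is oriented, each vertex has in- and out-degree $k$, and the underlying graph is a vertex-transitive connected circulant of degree $2k$, hence 2-connected.

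The key step is the covering identity $\pm S \cup \pm(S + S) = \mathbb{Z}_n \setminus \{0\}$. Granted this, for any distinct $i, j \in \mathbb{Z}_n$ the difference $j - i$ is either in $\pm S$ (so $i$ and $j$ are adjacent) or in $\pm(S + S)$ (so there is a directed length-$2$ path between them). Either way, $f(i) \neq f(j)$ in every $L(2,1)$-labeling, so all $n$ labels are distinct and $\overrightarrow{\lambda}_{2,1}(G) \geq n - 1 = k^2 - k$, matching the claimed $k^2 + O(k)$.

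The main obstacle is the covering identity itself. The sizes are consistent, since $|\pm S \cup \pm(S + S)| \leq 2k + k(k+1) = k^2 + 3k$ exceeds $n - 1 = k^2 - k$, but one must check that the covering is actually attained. I would verify it either by a direct computation of $D + D$ from the explicit trace description of the Singer set in $\mathbb{F}_{q^3}^* / \mathbb{F}_q^*$, or by an averaging argument over the translates $t$, showing that some choice simultaneously satisfies skewness and covering. This is precisely where the directed argument genuinely departs from the undirected projective plane construction, which needs only that any two points of $PG(2, q)$ lie on a common line, whereas here the additive (rather than difference) structure of the Singer set must do the work.
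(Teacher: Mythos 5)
Your overall strategy (build an oriented graph on $n=k^2+O(k)$ vertices in which every pair of vertices is adjacent or joined by a directed path of length two, hence all labels distinct) is exactly the right mechanism, and your degree, skewness and 2-connectivity checks are fine. But the proof has a genuine gap at the step you yourself flag: the covering identity $\pm S\cup\pm(S+S)\supseteq\mathbb{Z}_n\setminus\{0\}$ is not established, and a closer look suggests it is false for your choice of $S$, or at best true only for very atypical translates that you have given no way to find. Here is the concrete obstruction. Since $D$ is a perfect difference set it is a Sidon set, so $|S+S|=k(k+1)/2$ exactly; your budget of $k^2+3k$ for $|\pm S\cup\pm(S+S)|$ against the target $n-1=k^2-k$ therefore requires $(S+S)\cap(-(S+S))$ to have size $O(k)$. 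Writing $g$ for the indicator of $D+D$, one has $|(S+S)\cap(-(S+S))|=\sum_u g(u)\,g(-u-4t)$, and summing this over all $t\in\mathbb{Z}_n$ (using that $4$ is invertible mod $n$) gives $|D+D|^2=\bigl(k(k+1)/2\bigr)^2\approx k^4/4$. Hence the \emph{average} over translates $t$ of the overlap is about $k^4/(4n)\approx k^2/4$, i.e. a typical translate loses a constant fraction of $\pm(S+S)$ to cancellation and covers only about $\tfrac34 k^2$ elements — far short of $n-1$. So the averaging argument you propose goes in the wrong direction, and a translate with overlap $O(k)$, if one exists at all, would have to be located by a much more delicate argument (the relevant Fourier error term $\sum_{\chi\neq 1}\widehat{D}(\chi)^4\overline{\chi(c)}$ has the same order of magnitude as the main term, so nothing generic will work). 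Note also that $-1$ is never a multiplier of a planar difference set, so there is no symmetry to exploit here: the difference-set property controls $S-S$, which is irrelevant, while everything in the directed problem hinges on the sumset $S+S$.

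For comparison, the paper avoids this additive-combinatorial trap entirely by abandoning the group-translation structure: it takes $V(G)=\mathbb{Z}_k^2$ with de Bruijn-type arcs ($ab\to bc$, plus small corrective families), so that a length-two walk $ab\to bc\to cd$ reaches essentially every target $cd$ by construction, and the work goes into perturbing the orientation to kill loops and opposite arcs while preserving this reachability. If you want to salvage a circulant construction you would need a skew set $S$ with $|S|=k+O(1)$ such that $S+S$ is not only Sidon-like but also almost disjoint from its negative — a genuinely different (and apparently hard) requirement from being a perfect difference set.
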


\begin{proof}
Let $V(G)=\mathbb{Z}_k^2$, where $k \geq 4$ is a positive integer. To simplify the notation, we write $ab$ for the pair $(a,b)\in\mathbb{Z}_k^2$. The arcs of $G$ are defined as follows, where the operations are considered modulo $k$:

\begin{enumerate}[i.]
    \item $ab \rightarrow bc$, if $c>a$.
    
    \item $ab \rightarrow (b+1)c$, if $c\leq a$ and $c \ne a-1$.
    
    \item $ab \rightarrow a(b+1)$, if $a\ne b+2$.
    
    \item $ab \rightarrow (a+1)b$, if $a \ne b+1$.
    
\medskip

It is easy to check that $G$ does not contain opposite arcs and both the indegree and outdegree of its vertices are bounded by $k+1$. Furthermore, it will be clear from the proof that its underlying graph is 2-connected.

\medskip 

Note that to prove that the theorem it suffices to show that, for every pair of vertices $ab$, $cd$ with $a,b,c,d \notin \{0,k-1\}$, there is a directed path of length at most 2 from $ab$ to $cd$ or vice-versa. Therefore, we assume this condition holds in what follows.

\medskip

We can find paths of length at most 2 joining $ab$ and $cd$ as follows:

\begin{enumerate}[1.]
    \item If $a<c$ and $b<d$: $ab \rightarrow bc \rightarrow cd$.
    
    \item If $a>c$ and $b>d$: $cd \rightarrow da \rightarrow ab$.
    
    \item If $a<c$ and $b>d$: $cd \rightarrow (d+1)a \rightarrow ab$, except if:
        \begin{enumerate}[i.]
            \item $c=a+1$: $ab \rightarrow (b+1)a \rightarrow (a+1)d$. 
            
            \item $b=d+1$: $cd \rightarrow (d+1)(a-1) \rightarrow a(d+1)$.
        \end{enumerate}
    
    \item If $a>c$ and $b<d$: $ab \rightarrow (b+1)c \rightarrow cd$, except if:
        \begin{enumerate}[i.]
            \item $a=c+1$: $(c+1)b \rightarrow (b+1)(c-1) \rightarrow cd$. 
            
            \item $d=b+1$: $ab \rightarrow (b+1)(c-1) \rightarrow c(b+1)$.
         \end{enumerate}   
         
    \item If $a=c$ and, say, $b<d$ (without loss of generality): $ab \rightarrow (b+1)a \rightarrow ad$, except if:
        \begin{enumerate}[i.]
            \item $d=b+1$ and $a \neq b+2$: $ab \rightarrow a(b+1)$.
            
            \item $d=b+1$ and $a=b+2$: $a(b+1) \rightarrow ab$.
         
         \end{enumerate}   
    \item If, say, $a<c$ (without loss of generality) and $b=d$: $ab \rightarrow b(c-1) \rightarrow cb$, except if:
    
        \begin{enumerate}[i.]
            \item $c=a+1$ and $a \ne b+1$: $ab \rightarrow (a+1)b$.
            
            \item $c=a+1$ and $a=b+1$: $(a+1)b \rightarrow ab$.
        \end{enumerate}

        \end{enumerate}

\end{enumerate}

\end{proof}

\section{Other directed versions}\label{new}

Many different generalizations of the $L(2,1)$-labeling problem have been investigated. The $L(h,k)$-labeling is probably the most famous of them: it is defined as a coloring of the vertices of a graph (either undirected or directed) with integers $\{0,\dots,n\}$ for which adjacent vertices get colors at least $h$ apart, and vertices connected by a path of length 2 get colors at least $k$ apart. When the interval is considered as a cycle (and hence, for instance, the colors 0 and $n$ are just 1 apart), we get yet another new variant. Again, we refer to the survey of Calamoneri \cite{calamoneri2011h} as a comprehensive list of results and references about those and other related problems.

\medskip

In this section, we propose other versions of the problem. A path of lenght two admits three pairwise non-isomorphic orientations: $a \rightarrow b \rightarrow c$, $a \rightarrow b \leftarrow c$, and $a \leftarrow b \rightarrow c$; we call these paths $P_1$, $P_2$ and $P_3$, respectively. In this terminology, we can rephrase the definition of a $L(2,1)$-labeling of an oriented graph $G$ as follows: an assignment $f:V(G)\rightarrow\{0,\dots,k\}$ such that $|f(u)-f(v)|\geq 2$, if $uv \in E(G)$; and $|f(u)-f(v)|\geq 1$, if there is a $P_1$ in $G$ joining $u$ and $v$.

\medskip

We study the corresponding problems that arise when we replace $P_1$ in this definition by $P_2$ or $P_3$, or, even more generally, by a subset $S$ of $\{P_1,P_2,P_3\}$. We denote the corresponding minimum value of $k$ by $\lambda_{S}(G)$. Some of the choices of $S$ lead us back to previous questions, namely, $\lambda_\emptyset(G)=2\chi(G)-1$; $\lambda_{\{P_1,P_2,P_3\}}(G)=\lambda_{2,1}(H)$, where $H$ is the underlying graph of $G$; and $\lambda_{\{P_1\}}(G)=\overrightarrow{\lambda}_{2,1}(G)$. Also, by the symmetry of $P_2$ and $P_3$, we have just the following three cases left to consider: $S=\{P_2\}$, $S=\{P_2,P_3\}$ and $S=\{P_1,P_2\}$.

\medskip

In each one of those cases, we are going to determine the order of magnitude, and, with one exception, the correct asymptotic value, of the maximum possible value of $\lambda_S(G)$ in terms of the maximum degree of $G$.

\medskip

First, we consider $S=\{P_2\}$, i.e., when the only two path considered is $a \rightarrow b \leftarrow c$. We have the following asymptotically sharp result:

\begin{theorem}\label{thm2}
Let $G$ be an oriented graph such that $d_+(v) \leq k$ and $d_-(v)\leq k$ for all $v \in V(G)$. Then $\lambda_{\{P_2\}}(G) \leq k^2+O(k)$, and there is a family of graphs that matches this upper bound asymptotically.
\end{theorem}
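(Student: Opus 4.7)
The plan is to run a simple greedy labeling analogous to the one in Theorem~1. Order the vertices of $G$ arbitrarily as $v_1,\dots,v_n$ and color them one at a time. When I reach $v_i$, I count how many colors are forbidden by already-colored vertices. Edge-type constraints contribute at most $3\cdot 2k=6k$ forbidden values, since each of the at most $2k$ in- and out-neighbors of $v_i$ forbids its own color together with the two adjacent integers. A $P_2$-constraint with an already-colored vertex $u$ only forbids the single color of $u$, and can only occur when $u$ lies in $\bigl(\bigcup_{w\in N^+(v_i)}N^-(w)\bigr)\setminus\{v_i\}$, a set of size at most $k\cdot k=k^2$. Hence at most $k^2+6k$ colors are forbidden, so a color in $\{0,\dots,k^2+6k\}$ is always available, yielding $\lambda_{\{P_2\}}(G)\le k^2+6k$.

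\textbf{Plan for the lower bound.} For the matching construction I would mimic, in oriented form, the Griggs--Yeh projective-plane example. Take a projective plane $\Pi$ of order $q$, with $q^2+q+1$ points and $q^2+q+1$ lines, each line containing $q+1$ points. Build a bipartite oriented graph $G$ on $V=P\cup L$ by placing an arc $p\to l$ for every incidence $p\in l$. Then every point has out-degree exactly $q+1$ and in-degree $0$, while every line has in-degree exactly $q+1$ and out-degree $0$, so all in- and out-degrees are bounded by $k:=q+1$. The key observation is that any two distinct points of $\Pi$ lie on a unique common line, and this line is a common out-neighbor of the two points in $G$; hence every pair of points is joined by a $P_2$ and so they must all receive pairwise distinct labels. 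Since there are $q^2+q+1$ points, this forces $\lambda_{\{P_2\}}(G)\ge q^2+q=(k-1)^2+(k-1)=k^2-k$, matching the upper bound up to lower-order terms.

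\textbf{Expected obstacle.} The greedy upper bound is essentially bookkeeping, and the real content is making sure the $P_2$-relation ``shares an out-neighbor'' is correctly counted; once the right neighborhood set is identified, the $k^2$ bound is immediate. The main subtlety in the lower bound is that projective planes of order $q$ are only known to exist when $q$ is a prime power, so the construction produces an extremal family only along an infinite subsequence of values of $k$; this is consistent with the ``asymptotically matches'' phrasing of the theorem. If one instead wanted a bound for every $k$, one would have to interpolate (for example, take the plane of order $q$ for the largest prime power $q\le k-1$ and pad with isolated vertices, using the density of prime powers to keep the loss in the $O(k)$ term).
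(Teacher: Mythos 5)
Your proposal is correct and follows essentially the same route as the paper: a greedy coloring in which only the $\le k\cdot k$ in-neighbors of out-neighbors contribute $P_2$-constraints (the paper refines this to $k(k-1)$, getting $k^2+5k$ instead of your $k^2+6k$, an immaterial difference), and the oriented incidence graph of a projective plane with all arcs from points to lines for the matching lower bound. Your added remarks on the prime-power existence of projective planes and interpolation are a sensible clarification the paper leaves implicit.
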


\begin{proof}
We color $G$ greedily with the colors $\{0,\dots,k^2+5k\}$: given a vertex $v$, each of its at most $2k$ neighbors forbid at most 3 colors for $v$. Among the second neighbors, only the at most $k(k-1)=k^2-k$ vertices that are joined by a $P_2$ to $v$ forbid colors for $v$, at most one new color per vertex. In total, at most $3 \cdot 2k+k^2-k=k^2+5k$ colors are forbidden for $v$. 

\medskip

As for the sharpness of the bound, the same construction as in the undirected case works. Let $G=(A,B,E)$ be the oriented bipartite incidence graph of a projective plane with point set $A$, line set $B$, $|A|=|B|=k$, and all the edges pointing from $A$ to $B$. Both the in- and outdegrees of $G$ are bounded by $(1+o(1))\sqrt{k}$ and there is a $P_2$ joining every pair of vertices in $A$. Therefore, at least $n$ different colors are needed in any valid labeling of $G$.
\end{proof}

\medskip

In the case $S=\{P_2,P_3\}$, we have the following result, which does not yield an asymptotic sharp bound, but a factor 2 for the ratio between the upper and lower estimates:

\medskip

\begin{theorem}\label{thm23}
Let $G$ be an oriented graph such that $d_+(v) \leq k$ and $d_-(v)\leq k$ for all $v \in V(G)$. Then $\lambda_{\{P_2,P_3\}}(G) \leq 2k^2+O(k)$. On the other hand, there is a family of graphs $G$ with $d_+(v)= (1+o(1))k$, $d_-(v)= (1+o(1))k$ for every $v \in V(G)$ and $\lambda_{\{P_2,P_3\}}(G)\geq k^2+O(k)$.
\end{theorem}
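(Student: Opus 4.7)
The plan is to prove the two halves of Theorem \ref{thm23} by adapting the arguments used in Theorem \ref{thm2} for $S=\{P_2\}$.

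For the upper bound I would run essentially the same greedy procedure: process the vertices in arbitrary order and, at each step, colour the current vertex $v$ from the palette $\{0,1,\dots,2k^2+4k\}$ with any value not yet forbidden. The accounting is now symmetric in the two orientations. The at most $2k$ in- and out-neighbours of $v$ each forbid $3$ values, contributing $6k$. A vertex $c \ne v$ is joined to $v$ by a $P_2$ iff it shares an out-neighbour with $v$, and the number of such $c$ is at most $k(k-1)$; dually, $c$ is joined to $v$ by a $P_3$ iff it shares an in-neighbour with $v$, again at most $k(k-1)$ in number. Each such second-neighbour blocks at most one colour, so the total number of forbidden values at $v$ is at most $6k+2k(k-1)=2k^2+4k$, leaving at least one free colour.

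For the lower bound I would reuse the oriented bipartite incidence graph $G=(A,B,E)$ of a projective plane of order $q$ already exhibited in the proof of Theorem \ref{thm2}, with all arcs directed from points $A$ to lines $B$. Any two points share a line, producing a $P_2$ between them; dually, any two lines share a point, producing a $P_3$. Hence the vertices of $A$ (and separately the vertices of $B$) must all receive pairwise distinct colours, so $\lambda_{\{P_2,P_3\}}(G) \geq |A|-1 = q^2+q$. Since the in- and out-degrees are both $q+1 = (1+o(1))\sqrt{|A|}$, setting $k:=q+1$ gives $|A|=k^2+O(k)$ and yields the claimed lower bound.

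Neither half presents a serious obstacle, since each is the expected symmetrisation of the argument for $\{P_2\}$: the greedy count simply doubles the second-neighbour term, and the projective-plane construction automatically supplies $P_2$-paths inside $A$ and $P_3$-paths inside $B$ at the same time. The genuine difficulty, which the theorem explicitly does not address, is the factor-$2$ gap between the greedy upper bound $2k^2$ and the incidence-geometry lower bound $k^2$; closing it would presumably require either a more structural colouring than the greedy one, or a construction denser than a bipartite design in which $P_2$- and $P_3$-constraints reinforce each other across the two sides.
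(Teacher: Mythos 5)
Your proposal follows exactly the route the paper takes (and explicitly omits the details of): the greedy colouring with the symmetrised second-neighbourhood count $6k+2k(k-1)=2k^2+4k$ for the upper bound, and the reuse of the oriented projective-plane incidence graph, now exploiting both the $P_2$'s inside $A$ and the $P_3$'s inside $B$, for the lower bound. The only caveat --- inherited from the paper itself, which also points back to the construction of Theorem \ref{thm2} --- is that with all arcs directed from $A$ to $B$ the points have in-degree $0$ and the lines out-degree $0$, so the literal claim that \emph{every} vertex has $d_+(v)=(1+o(1))k$ and $d_-(v)=(1+o(1))k$ would need a small patch, although the degree upper bounds and the bound $\lambda_{\{P_2,P_3\}}(G)\geq k^2+O(k)$ hold exactly as you argue.
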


\medskip

The proof of the upper bound in Theorem \ref{thm23} is obtained in a similar way as in Theorem \ref{thm2}, i.e., coloring the graph greedly, bounding the number of forbidden colors for a given vertex using the sizes of its first and second neighboorhoods. The lower bound comes from the very same construction in Theorem \ref{thm2}. We omit the details. 

\medskip

Finally, in the case $S=\{P_1,P_2\}$, we have a different upper bound and an asymptotically sharp construction, as stated in the following theorem:

\begin{theorem}
Let $G$ be an oriented graph such that $d_+(v) \leq k$ and $d_-(v)\leq k$ for all $v \in V(G)$. Then $\lambda_{\{P_1,P_2\}}(G) \leq 3k^2+O(k)$. Furthermore, there is a family of graphs that matches this bound asymptotically.
\end{theorem}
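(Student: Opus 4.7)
My plan is to handle the upper and lower bounds separately, in the same spirit as Theorems \ref{thm2} and \ref{thm23}.

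For the upper bound, I would greedily label the vertices of $G$ from the palette $\{0,1,\ldots,3k^2+5k\}$. Fix a vertex $v$. Its at most $2k$ neighbors each forbid $3$ colors, contributing at most $6k$ to the total. A vertex $v'\ne v$ can constrain $v$ through a $P_1$ or $P_2$ witness in three ways: (i) a forward path $v\to w\to v'$, giving at most $k\cdot k$ such $v'$ (choosing $w\in N^+(v)$ and then $v'\in N^+(w)$); (ii) a backward path $v'\to w\to v$, giving at most $k\cdot k$; or (iii) a common out-neighbor $v\to w\leftarrow v'$, giving at most $k(k-1)$ such $v'$, since $v$ itself occupies one of the $\leq k$ in-slots of every $w\in N^+(v)$. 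Each such vertex forbids one color, adding at most $3k^2-k$ to the count, so at most $3k^2+5k$ colors are forbidden in total and the greedy labeling succeeds.

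For the matching lower bound, I aim to exhibit an oriented graph on $n=(3-o(1))k^2$ vertices with in- and out-degrees $(1+o(1))k$ in which \emph{every} pair of vertices is related by a $P_1$ or $P_2$, forcing pairwise distinct labels and hence $\lambda_{\{P_1,P_2\}}\geq n-1=(3-o(1))k^2$. The count above shows this is the most one could hope for: each vertex has at most $3k^2-k$ potential conflict partners, with the three witness types contributing $\approx k^2$ each.

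The hard part is realizing this extremal count in a concrete construction. The incidence graph of a projective plane (as in Theorem \ref{thm2}) supplies only $P_2$-witnesses and so yields only $k^2$ related vertices. My attempt would be to superimpose three oriented incidence structures on a common ground set of size $\approx 3k^2$, each layer being an oriented point-line incidence of a projective plane of order $\approx\sqrt{k}$ and each dedicated to one of the three witness types (forward $P_1$, backward $P_1$, and $P_2$). The delicate point, analogous to the case analysis in the proof of Theorem \ref{const}, is orienting the three layers so that every pair of vertices is witnessed by at least one layer while the in- and out-degrees stay within $k+O(1)$.
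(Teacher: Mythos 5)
Your upper bound is correct and coincides with the paper's (which simply invokes the greedy algorithm of Theorem~\ref{thm2}); your explicit count of $6k+k^{2}+k^{2}+k(k-1)=3k^{2}+5k$ forbidden colors is exactly the right bookkeeping.

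The lower bound is a genuine gap: you name the target but never produce the construction, and the plan you sketch would not work as described. If you put three incidence layers on a common ground set, each layer \emph{dedicated} to one witness type and each required to supply $\approx k^{2}$ conflict partners per vertex via length-two paths, then each layer alone already forces degree $\approx k$ at every vertex, so the three layers together push the degrees to $\approx 3k$ rather than $k+O(1)$. Moreover, insisting that \emph{all} $\approx 3k^{2}$ ground-set vertices be pairwise conflicting (with no auxiliary vertices) would, by your own count, require essentially every length-two path to witness a distinct pair --- an exact design condition you have no reason to be able to meet. The paper's construction sidesteps both problems: take three copies $H,H',H''$ of the oriented point--line incidence graph of a projective plane (arcs from points to lines), add arcs from the lines of each copy to the points of the \emph{next} copy cyclically, and add arcs among twin points. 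The lines are auxiliary vertices outside the conflicting set, and each line through a point $p$ is \emph{reused} for two witness types at once: it is a common out-neighbor shared with the other points of the same copy (a $P_2$) and the middle of forward $P_1$'s to points of the next copy, while $p$'s in-neighborhood receives backward $P_1$'s from the previous copy. This gives $\approx 3d^{2}$ pairwise-conflicting points with in- and out-degrees $(1+o(1))d$. The edge-sharing between witness types is the key idea missing from your sketch.
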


\begin{proof}
Again, we apply the greedy algorithm as in Theorem \ref{thm2} to get the upper bound. 

\medskip 
On the other hand, consider the following construction: if $H=(A,B,E)$ is the bipartite incidence graph of a projective plane with point set $A$, line set $B$ and $|A|=|B|=k$ with all edges oriented from $A$ to $B$, let $H'=(A',B',E')$ and $H''=(A'',B'',E'')$ be two copies of $H$ with edges oriented from $A'$ to $B'$ and $A''$ to $B''$, respectively. For a vertex $p \in V(H)$, we denote by $p'$ (resp. $p''$) its copy in $H'$ (resp. $H''$), and we call $p, p',p''$ twin vertices. We construct an oriented graph $G$ as follows: The vertex set of $G$ is $V(G)=V(H)\cup V(H')\cup V(H'')$. The edge set of $G$ is $E(G)=E(H)\cup E(H')\cup E(H'')\cup \{(l,p'),(l',p''),(l'',p):l \in B, p\in A\ \text{ and } (p,l)\in E(H)\} \cup \{(p,p'), (p',p''),(p'',p):p \in A\}$. In the graph $G$, all degrees are bounded by $(1+o(1))\sqrt{k}$. Moreover, given two vertices $p,q$ from $A\cup A'\cup A''$, either they are joined by a $P_2$ (in case both vertices come from the same set), by a $P_1$ (if they are in different sets and are not twin vertices) or by an edge (if they are twin vertices). This shows that a valid labeling of $G$ must use at least $3k$ colors.

\medskip

\end{proof}

\section{Open problems} 

There is a big list of problems to investigate about the labelings defined in the present note. Virtually every question studied for the undirected or the classical directed $L(2,1)$-labelings can be asked in the newly introduced settings. This list includes determining the exact value of the parameters for specific classes of graphs and finding relations between $\lambda_S(G)$ and other graph parameters, as it was done, for instance, with the path covering number \cite{lu2013path}. Moreover, it would be interesting to determine the correct asymptotic values in the cases $S=\{P_1\}$ and $S=\{P_2,P_3\}$. In particular, we conjecture that the construction in Theorem \ref{const} can be improved to match the upper bound asymptotically:

\begin{conj}
There is an oriented graph $G$ for which each indegree and outdegree is bounded by $(1+o(1))k$ and $\overrightarrow{\lambda}_{2,1}(G)\geq 2k^2 + O(k)$.
\end{conj}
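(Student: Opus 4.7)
\medskip

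The plan is to construct, for each sufficiently large $k$, an oriented graph $G$ on $n = 2k^2 + O(k)$ vertices with $d_+(v), d_-(v) \leq (1+o(1))k$ in which every pair of distinct vertices is joined by a directed $P_1$-path. Any $L(2,1)$-labeling of such a $G$ would then have to assign pairwise distinct labels to all $n$ vertices, immediately yielding $\overrightarrow{\lambda}_{2,1}(G) \geq n - 1 = 2k^2 + O(k)$.

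\medskip

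Write $N_2^+(v)$ for the vertices reachable from $v$ via $v \to x \to y$ and $N_2^-(v)$ for those reaching $v$ via $y \to x \to v$; under the degree constraint each has size at most $k^2$. The construction in Theorem \ref{const} essentially saturates only one of these two neighborhoods, which is exactly what caps its lower bound at $k^2 + O(k)$. My plan is to produce a graph in which $N_2^+(v)$ and $N_2^-(v)$ are disjoint, each of size $(1-o(1))k^2$, and together cover $V(G)\setminus\{v\}$. A natural first attempt is a Cayley graph on $\mathbb{Z}_n$ with disjoint out- and in-connection sets $S^+$ and $S^-$ of size $k$: then $N_2^+(0) = S^+ + S^+$ and $N_2^-(0) = -(S^- + S^-)$, and one would ask these two sumsets to partition $\mathbb{Z}_n \setminus \{0\}$.

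\medskip

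The main obstacle to this attempt is the elementary bound $|S \pm S| \leq \binom{|S|+1}{2}$, which caps the combined coverage at $k^2 + k$ however Sidon-like the chosen sets are. Consequently no vertex-transitive construction can reach $2k^2$, and translation-invariance must be broken. A promising route is to partition $V = A \sqcup B$ with $|A| = |B| = k^2 + O(k)$, embed a Theorem \ref{const}-style structure inside each part that realises only \emph{forward} $P_1$-paths there, and design cross-arcs so that every pair in $A \times B$ is joined by a $P_1$ whose middle vertex lies in the opposite part; then $N_2^+(v)$ would span almost all of $v$'s own part and $N_2^-(v)$ almost all of the other.

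\medskip

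The hard step, and presumably the reason the authors leave the statement as a conjecture, is the arc-budgeting: each vertex must split its $k$ out- and $k$ in-arcs between intra-part duties (realising the $P_1$'s inside its own part) and inter-part duties, while still producing a $P_1$ to every one of the $\approx 2k^2$ other vertices. A clean parametrisation --- perhaps replacing $\mathbb{Z}_k^2$ by the point-line incidence structure of an affine or projective plane of order $k$ --- is likely needed for the simultaneous covering to succeed, and verifying $P_1$-connectivity of all cross-pairs will probably require a case analysis in the flavour of, but more intricate than, the one in the proof of Theorem \ref{const}.
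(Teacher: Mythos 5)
This statement is left as a \emph{conjecture} in the paper --- the authors provide no proof of it, only the weaker construction of Theorem~\ref{const} giving $k^2+O(k)$. So the relevant question is whether your proposal actually closes the gap, and it does not: no graph is constructed. You correctly identify the right lower-bound mechanism (a graph on $n=2k^2+O(k)$ vertices in which every pair is joined by a directed path of length two forces $n$ distinct labels), you correctly note that this requires $N_2^+(v)$ and $N_2^-(v)$ to be essentially disjoint and each of size $(1-o(1))k^2$, and your observation that an abelian Cayley graph cannot work --- since $N_2^+(0)=S+S$ and $N_2^-(0)=-(S+S)$ each have size at most $\binom{k+1}{2}$, capping the union at $k^2+k$ --- is a genuine and useful insight that explains why Theorem~\ref{const} stalls at $k^2$. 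But the step that would constitute the proof, namely exhibiting arcs (cross-arcs between the two parts $A$ and $B$, or otherwise) and verifying that every pair of vertices is actually joined by a $P_1$ while keeping all in- and outdegrees at $(1+o(1))k$, is explicitly deferred (``the hard step\dots is the arc-budgeting''). A plan plus an admission that its key step is unresolved is not a proof; as it stands the conjecture remains open under your proposal exactly as it does in the paper.

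Two smaller points. First, your inference that ``no vertex-transitive construction can reach $2k^2$'' does not follow from the sumset bound you cite: that bound is specific to abelian groups, and for a non-abelian Cayley graph the product sets $S\cdot S$ and $S^{-1}\cdot S^{-1}$ can each have size close to $k^2$, so non-abelian Cayley graphs (and vertex-transitive graphs generally) are not ruled out --- indeed they may be a more promising avenue than the broken-symmetry bipartite scheme you sketch. Second, in your two-part scheme each vertex would need roughly $k$ out-arcs for intra-part coverage and roughly $k$ more for inter-part coverage, which already pushes the outdegree to $2k$ rather than $(1+o(1))k$ unless the same arcs serve both duties; this tension is precisely the unsolved content of the conjecture and deserves to be flagged as such rather than as a technicality.
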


\medskip

\section{Acknowledgments}

The research that led to this paper started in WoPOCA 2018, which was financed by FAPESP - S\~ao Paulo Research Foundation (2013/03447-6) and CNPq - The Brazilian National Council for Scientific and Technological Development (456792/2014-7). The research of the second author was partially supported by NKFIH grants \#116769, \#117879 and \#126853.

\bibliographystyle{plain}
\bibliography{refs}{}

\end{document}